\newtheorem{thm}{Theorem}[section]
\theoremstyle{definition}
\newtheorem{ex}[thm]{Example}
\newtheorem{alg}[thm]{Algorithm}
\theoremstyle{definition}
\newtheorem{rem}[thm]{Remark}
\numberwithin{equation}{section}
\newtheorem{summ}[thm]{Summary}
\def \N { {\mathbb N} }
\def \E { {\mathbb E} }
\def\ve#1{\mathchoice{\mbox{\boldmath$\displaystyle\bf#1$}}
{\mbox{\boldmath$\textstyle\bf#1$}}
{\mbox{\boldmath$\scriptstyle\bf#1$}}
{\mbox{\boldmath$\scriptscriptstyle\bf#1$}}}
\title{Estimating the number of zero-one multi-way tables via sequential importance sampling}
\author{Jing Xi \and Ruriko Yoshida \and David Haws}
\date{}
\begin{document}

\maketitle                 

\begin{abstract}
In 2005, Chen et al introduced a sequential importance sampling (SIS) procedure
to analyze  zero-one two-way tables with given fixed marginal sums (row and
column sums) via the conditional Poisson (CP)
distribution.  They showed that compared with Monte Carlo Markov chain 
(MCMC)-based approaches, their importance sampling method is
more efficient in terms of running time and also provides an easy and
accurate estimate of the total number of contingency tables with fixed marginal 
sums.  In this paper we extend their result to zero-one
multi-way ($d$-way, $d \geq 2$) contingency tables under the no
$d$-way interaction model, i.e., with fixed $d - 1$ marginal sums.  
Also we show by simulations that the SIS procedure with CP distribution to
estimate the number of zero-one
three-way tables under the no three-way interaction model given marginal
sums works very well even with some rejections.  We also applied our
method to Samson's monks' data set.  We end with
further questions on the SIS procedure on zero-one multi-way tables.
\end{abstract}

\section{Introduction}

Sampling zero-one constrained contingency tables finds its applications
in combinatorics \cite{Huber06}, statistics of social networks
\cite{chen2007, Snijders1991}, 
and regulatory networks \cite{Dinwoodie2008}.  
In 2005, Chen et al.\ introduced a sequential importance sampling (SIS) procedure
to analyze  zero-one two-way tables with given fixed marginal sums (row and
column sums) via the conditional Poisson (CP)
distribution \cite{chen2005}.  
It proceeds by simply sampling cell entries of the zero-one contingency
table sequentially for each row 
such that the final distribution approximates the target distribution. This
method will terminate at the last column and sample independently and
identically distributed (iid) tables from the
proposal distribution. Thus the SIS procedure does not require expensive or
prohibitive pre-computations, as is the case of computing Markov
bases for the Monte Carlo Markov Chain (MCMC)
approach. Also, when attempting to sample a single table, 
if there is no rejection, the SIS procedure is guaranteed to sample a
table from the distribution, where in an
MCMC approach the chain may require a long time to run in order to
satisfy the independent condition.

In 2007, Chen extended their SIS procedure to
sample zero-one two-way tables with given fixed row and
column sums with structural zeros, i.e., some cells are
constrained to be zero or one \cite{chen2007}.  
In this paper we also extended the results from
\cite{chen2005,chen2007} to zero-one
multi-way ($d$-way, $d \geq 2$) contingency tables under the no
$d$-way interaction model, i.e., with fixed $d - 1$ marginal sums.  

This paper is organized as follows:  In Section \ref{sec:sis} we
outline basics of the SIS procedure.   In Section \ref{3dim} we focus on
the SIS procedure with CP distribution on three-way tables under no
three-way interaction model.  This model is particularly important
since if we are able to count or estimate the number of tables under this
model then this is equivalent to estimating the number of {\em lattice
  points} in any {\em polytope} \cite{deloera06}.  This means that if
we can estimate the number of three-way zero-one tables under this model,
then we can estimate the number of any zero-one tables by using De Loera
and Onn's bijection mapping.     

Let $\ve X=(X_{i j k})$ of size
$(m \mbox{, }n \mbox{, } l)$, where $m, n, l \in \N$ and $\N = \{1, 2,
\ldots, \}$, be a table of counts whose entries are 
independent Poisson random variables with canonical parameters
$\{\theta_{ijk}\}$. Here $X_{ijk} \in \{0, 1\}$.  Consider the generalized linear model,
\begin{eqnarray}
\label{eq:RCmodel}
  \theta_{ijk} = \lambda + \lambda^M_i + \lambda^N_j + \lambda^L_k +
  \lambda_{ij}^{MN} + \lambda_{ik}^{ML} +\lambda_{jk}^{NL}\,
\end{eqnarray}
for $i=1,\ldots,m$, $j=1,\ldots,n$, and $k = 1, \ldots , l$ where $M$,
$N$, and $L$ denote the
nominal-scale factors.  This model is called the {\em no three-way
  interaction model}.

Notice that the sufficient statistics under the
model in \eqref{eq:RCmodel} are the {\em two-way marginals}, that is:
\begin{equation}\label{tableEqu}
 \begin{array}{ll}
X_{+jk} := \sum _{i=1}^m X_{i j k} \mbox{, } (j=1,2,\ldots,n\mbox{, } 
k=1,2,\ldots,l),\\
X_{i+k} := \sum _{j=1}^n X_{i j k}\mbox{, } (i=1,2,\ldots,m\mbox{, } 
k=1,2,\ldots,l),\\
X_{ij+} := \sum _{k=1}^l X_{i j k}\mbox{, } (i=1,2,\ldots,m\mbox{, } 
j=1,2,\ldots,n),\\
\end{array}
\end{equation}
Hence, the
conditional distribution of the table counts given the margins is the
same regardless of the values of the parameters in the model.

In Section \ref{4dim} we generalize the SIS procedure on zero-one
two-way tables in \cite{chen2005,chen2007} to zero-one
multi-way ($d$-way, $d \geq 2$) contingency tables under the no
$d$-way interaction model, i.e., with fixed $d - 1$ marginal sums.
In Sections \ref{comp} and \ref{sam} we show some simulation results with our
software which is available in \url{http://www.polytopes.net/code/CP}.  Finally, we end with some discussions.

\section{Sequential importance sampling}\label{sec:sis}

Let $\Sigma$ be the set of all tables satisfying marginal
conditions.  In this paper we assume that $\Sigma \not = \emptyset$. 
Let $P({\bf X})$ for any ${\bf X}
\in \Sigma$ be the uniform distribution over $\Sigma$, so $p({\bf X}) =
1/|\Sigma|$.  Let $q(\cdot)$ be a trial distribution such that $q({\bf X}) >
0$ for all ${\bf X} \in \Sigma$.  Then we have 
\[
\E \left[\frac{1}{q({\bf X})}\right] = \sum_{{\bf X} \in \Sigma} \frac{1}{q({\bf X})} q({\bf X}) = |\Sigma|.
\]
Thus we can estimate $|\Sigma |$ by
\[
\widehat{|\Sigma|} = \frac{1}{N} \sum_{i = 1}^N \frac{1}{q({\bf X_i})},
\]
where ${\bf X_1}, \ldots , {\bf X_N}$ are tables drawn iid from $q({\bf X})$.
Here, this proposed distribution $q({\bf X})$ is the distribution
(approximate) to sample tables via the SIS procedure.  

We vectorized the table ${\bf X} = (x_1, \cdots , x_t)$ and by the
multiplication rule we have
\[
q({\bf X} = (x_1, \cdots , x_t)) = q(x_1)q(x_2|x_1)q(x_3|x_2, x_1)\cdots
q(x_t|x_{t-1}, \ldots , x_1). 
\]
Since we sample each cell count of a table from an interval we can
easily compute $q(x_i|x_{i-1}, \ldots , x_1)$ for $i = 2, 3, \ldots ,
t$.

When we have rejections, this means that we are sampling tables from a
bigger set $\Sigma^*$ such that $\Sigma \subset \Sigma^*$. In this
case, as long as the conditional probability $q(x_i|x_{i-1}, \ldots ,
x_1)$ for $i = 2, 3, \ldots$ and $q(x_1)$ are normalized, $q({\bf X})$
is normalized over $\Sigma^*$ since
\[
\begin{array}{rcl}
\sum_{{\bf X} \in \Sigma^*} q({\bf X}) &=& \sum_{x_1, \ldots, x_t} q(x_1) q(x_2|x_1)q(x_3|x_2, x_1)\cdots
q(x_t|x_{t-1}, \ldots , x_1)\\ 
&= & \sum_{x_1} q(x_1) \left[ \sum_{x_2}
  q(x_1|x_2) \left[ \cdots
    \left[ \sum_{x_t} q(x_t|x_{t-1}, \ldots , x_1)\right] 
  \right]\right]\\
& = & 1.\\
\end{array}
\]
Thus we have 
\[
\E \left[\frac{\mathbb{I}_{{\bf X} \in \Sigma}}{q({\bf X})}\right] = \sum_{{\bf
    X} \in \Sigma^*} \frac{\mathbb{I}_{{\bf X} \in \Sigma}}{q({\bf X})}
q({\bf X}) = |\Sigma|,
\]
where 
$\mathbb{I}_{{\bf X} \in \Sigma}$ is an indicator function for the set
$\Sigma$.    By the law of large numbers this estimator is unbiased.  

\section{Sampling from the conditional Poisson distribution}\label{3dim}

Let 
\[
Z = (Z_1, \ldots ,Z_l) 
\]
be independent Bernoulli trials with probability of successes
$p = ( p_1, \ldots , p_l)$. Then the random variable
\[
S_Z = Z_1 +\cdots +Z_l
\]
is a Poisson--binomial distribution.

We say the column of entries for the marginal $X_{i_0, j_0, +}$ of $\ve
X$ is the
$(i_0, j_0)$th column of $\ve X$ (equivalently we say $(i_0, k_0)$th column for the 
marginal $X_{i_0 + k_0}$ and $(j_0, k_0)$th column for the marginal $X_{+j_0k_0}$).
Consider the $(i_0, j_0)$th
column of the table $\ve X$ for some $i_0 \in \{1, \ldots, m\}$, $j_0
\in \{1, \ldots , n\}$ with the marginal $l_0 = X_{i_0j_0+}$.  Also we
let $r_k = X_{i_0+k}$ and  $c_k = X_{+j_o k}$.
Now let $w_k = p_k/(1 - p_k)$ where $p_k \in (0, \, 1)$.
Then,
\begin{equation}\label{dist}
P(Z_1 = z_1, \ldots ,Z_l = z_l|S_Z = l_0) \propto \prod_{k = 1}^l w_k^{z_k}.
\end{equation}

Thus for sampling a zero-one table with fixed marginals $X_{+jk}, \,
X_{i+k}$ for $i=1,2,\ldots ,m\mbox{, }  
j=1,2,\ldots ,n$, and $k = 1, 2, \ldots, l$, for
$X_{i_0j_0+}$ for each $i_0 \in \{1, \ldots, m\}$ and $j_0 \in \{1, 
\ldots , n\}$, (or one can do each $X_{i_0+k_0}$ or $X_{+j_0k_0}$
instead by similar way) one just decides which entries are ones
(basically there are ${l \choose l_0}$ many choices) using
the conditional Poisson distribution above.  We sample these
cell entries with ones (say $l_0$ many entries with ones) in the $(i_0,
j_0)$th column for the $L$ factor with the
following probability:
Let $A_k$, for $k = 1, \ldots , l_0$, be the set of selected entries.
Thus $A_0 = \emptyset$, and $A_{l_0}$ is the final sample that we obtain. At the
$k$th step of the drafting sampling $(k = 1, \ldots , l_0)$, a unit $j \in A^c_{k-1}$
is selected into the sample with probability
\[
P(j, A^c_{k-1}) = \frac{w_j R(l_0 - k, A^c_{k-1} - j)}{(l_0 - k +
  1)R(l_0-k+1, A^c_{k-1})},
\]
where 
\[
R(s, A) = \sum_{B \subset A, |B| = s} \left(\prod_{i \in B}w_i \right).
\]

For sampling a zero-one three-way table $\ve X$ with given two-way marginals
$X_{ij+}$, $X_{i+k}$, and $X_{+jk}$ for $i=1,2,\ldots ,m\mbox{, }  
j=1,2,\ldots ,n$, and $k = 1, 2, \ldots, l$, we sample for the $(i_0, j_0)$th
column of the table $\ve X$ for each $i_0 \in \{1, \ldots , m\}$, $j_0
\in \{1, \ldots , n\}$.  We set 
\begin{equation}\label{pr}
p_k := \frac{r_k \cdot c_k}{r_k \cdot c_k + (n - r_k)(m - c_k)}.
\end{equation}
Thus we have 
\begin{equation}\label{wt}
w_k = \frac{r_k \cdot c_k}{(n - r_k)(m - c_k)}.
\end{equation}

\begin{rem}
We assume that we do not have the trivial cases, namely,  $1 \leq r_k
\leq n-1$ and $1 \leq c_k \leq m - 1$. 
\end{rem}

\begin{thm}\label{main}
For the uniform distribution over all $m\times n \times l$ zero-one tables with given marginals  $r_k = X_{i_0+k}, \, c_k = X_{+j_0k}$ for
$k = 1, 2, \ldots, l$, and a fixed marginal for
the factor $L$, $l_0$, the marginal distribution of the fixed marginal
$l_0$ is the
same as the conditional distribution of $Z$ defined by \eqref{dist} given
$S_Z = l_0$ with 
\[
p_k :=  \frac{r_k \cdot c_k}{r_k \cdot c_k + (n - r_k)(m - c_k)}.
\]

\end{thm}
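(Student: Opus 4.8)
The plan is to compute the induced distribution of the $(i_0,j_0)$th column directly by a counting argument and then to match the resulting cell weights against the conditional Poisson distribution \eqref{dist}. First I would fix a candidate column $\ve z = (z_1,\ldots,z_l) \in \{0,1\}^l$ subject to $\sum_{k=1}^l z_k = l_0$ (a zero-one assignment of the cells $X_{i_0 j_0 k}$ consistent with the prescribed marginal $l_0$) and count the number of tables $\ve X$ that contain this particular column and satisfy the marginals $r_k = X_{i_0 + k}$ and $c_k = X_{+ j_0 k}$. The crucial observation is that, once $\ve z$ is fixed, the remaining cells of $\ve X$ split into three groups whose constraints no longer interact: (i) the cells $X_{i_0 j k}$ with $j \ne j_0$, which for each level $k$ must satisfy $\sum_{j \ne j_0} X_{i_0 j k} = r_k - z_k$; (ii) the cells $X_{i j_0 k}$ with $i \ne i_0$, which for each $k$ must satisfy $\sum_{i \ne i_0} X_{i j_0 k} = c_k - z_k$; and (iii) the cells $X_{i j k}$ with $i \ne i_0$ and $j \ne j_0$, which are unconstrained by $r_k$, $c_k$, $l_0$.

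Next I would count each group. Group (iii) contributes a factor $2^{(m-1)(n-1)l}$ that does not depend on $\ve z$; group (i) contributes $\prod_{k=1}^l \binom{n-1}{r_k - z_k}$; and group (ii) contributes $\prod_{k=1}^l \binom{m-1}{c_k - z_k}$. The nontriviality assumption $1 \le r_k \le n-1$ and $1 \le c_k \le m-1$ guarantees that each binomial is a well-defined positive integer for both $z_k = 0$ and $z_k = 1$. Hence the number of tables containing the column $\ve z$ is $N(\ve z) = 2^{(m-1)(n-1)l}\prod_{k=1}^l \binom{n-1}{r_k - z_k}\binom{m-1}{c_k - z_k}$, and under the uniform distribution the marginal probability of the column satisfies $P(\ve z) \propto \prod_{k=1}^l \binom{n-1}{r_k - z_k}\binom{m-1}{c_k - z_k}$.

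Finally I would convert this product over $k$ into the conditional Poisson form. Since each $z_k \in \{0,1\}$, I would factor $\binom{n-1}{r_k - z_k}\binom{m-1}{c_k - z_k} = \binom{n-1}{r_k}\binom{m-1}{c_k}\, w_k^{z_k}$, where $w_k$ is the product of the ratios $\binom{n-1}{r_k-1}/\binom{n-1}{r_k} = r_k/(n-r_k)$ and $\binom{m-1}{c_k-1}/\binom{m-1}{c_k} = c_k/(m-c_k)$, so that $w_k = \frac{r_k c_k}{(n-r_k)(m-c_k)}$, which is exactly \eqref{wt}. Pulling out the $\ve z$-independent constants leaves $P(\ve z) \propto \prod_{k=1}^l w_k^{z_k}$ on the set $\{\sum_k z_k = l_0\}$, which is precisely the conditional distribution \eqref{dist} of $Z$ given $S_Z = l_0$, once one records that $w_k = p_k/(1-p_k)$ with $p_k$ as in \eqref{pr}. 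This identifies the two distributions and completes the argument.

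I expect the point requiring the most care to be the decoupling claim in the first step: verifying that after fixing $\ve z$ the three groups of cells are genuinely constrained independently, each group by its own marginal equations alone, so that the total count factorizes cleanly into the stated product. Everything after that reduces to the elementary binomial-ratio bookkeeping and the observation that the constant prefactor is independent of $\ve z$.
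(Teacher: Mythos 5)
Your proof is correct, and it reaches the conclusion by a genuinely different route than the paper. The paper argues algorithmically: it generates the constrained tables by, independently for each layer $k$, choosing uniformly at random the $r_k$ positions of the ones in the $(i_0,k)$th column and the $c_k$ positions in the $(j_0,k)$th column, and then rejecting any table whose $(i_0,j_0)$th column does not sum to $l_0$; since the accepted output is uniform, the column's law is that of independent Bernoulli$(p_k)$ entries conditioned on $S_Z=l_0$, where
\[
p_k=\frac{\binom{n-1}{r_k-1}\binom{m-1}{c_k-1}}{\binom{n-1}{r_k-1}\binom{m-1}{c_k-1}+\binom{n-1}{r_k}\binom{m-1}{c_k}},
\]
the same binomial ratio that drives your computation. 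You instead compute the marginal of the uniform distribution directly: fix a candidate column $\ve z$, count its completions, observe that the count factorizes as $2^{(m-1)(n-1)l}\prod_k\binom{n-1}{r_k-z_k}\binom{m-1}{c_k-z_k}$, and convert the binomial ratios into the CP weights $w_k^{z_k}$. Your decoupling step is sound: given $\ve z$, the residual constraints $\sum_{j\ne j_0}X_{i_0jk}=r_k-z_k$ and $\sum_{i\ne i_0}X_{ij_0k}=c_k-z_k$ involve pairwise disjoint sets of cells (layer by layer), and the remaining cells are unconstrained, so the product formula is justified. What the paper's route buys is a sampling interpretation (independent layers plus rejection) that matches the CP drafting machinery used in the rest of Section \ref{3dim}. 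What your route buys is self-containedness: you never need the paper's assertion that the algorithm's output is uniform (stated there as ``easy to see''), and you sidestep a small imprecision in the paper's algorithm---within a layer, the two independent position choices can disagree on the shared cell $X_{i_0j_0k}$, producing a table that violates the $r_k$ or $c_k$ marginal, so strictly speaking such inconsistent outcomes must also be rejected before the stated $p_k$ is valid; your enumeration never encounters this issue. Both arguments use the nontriviality assumption $1\le r_k\le n-1$, $1\le c_k\le m-1$ in exactly the same place, to ensure all the relevant binomial counts are positive.
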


\begin{proof}
We start by giving an algorithm for generating tables uniformly
from all $m \times n \times l$ zero-one tables with given
marginals  $r_k, \, c_k$ for 
$k = 1, 2, \ldots, l$, and a fixed marginal for
the factor $L$, $l_0$.

\begin{enumerate}
\item\label{step1} For $k = 1, \ldots , l$ consider the $k$th layer of $m \times n$
  tables.  We randomly choose $r_k$ positions in the $(i_0, k)$th column
  and $c_k$ positions in the $(j_0, k)$th column,
and put $1$’s in those positions. The choices of positions are independent across
different layers.
\item\label{step2} Accept those tables with given column sum $l_0$.
\end{enumerate}
It is easy to see that tables generated by this algorithm are uniformly
distributed over all $m\times n \times l$ zero-one tables with given
marginals $r_k, \, c_k$ for 
$k = 1, 2, \ldots, l$, and a fixed marginal for
the factor $L$, $l_0$ for the $(i_0, j_0)$th column of the table $\ve X$.  We can derive the marginal distribution
of the $(i_0, j_0)$th column of $\ve X$ based on this algorithm. At Step \ref{step1}, we choose
the cell at position $(i_0, \, j_0, \,  1)$ to put $1$ in
with the probability:
\[
\frac{{n - 1\choose r_1 - 1}{m - 1 \choose c_1 - 1}}{{n - 1 
    \choose r_1 - 1}{m - 1 \choose c_1 - 1} + {n - 1 \choose r_1}{m -
    1 \choose c_1}} =  \frac{r_1 \cdot c_1}{r_1 \cdot c_1 + (n - r_1)(m - c_1)}.
\]
Because the choices of positions are independent across different layers,
after Step \ref{step1} the marginal distribution of the $(i_0, j_0)$th column is the same as
the distribution of $Z$ defined by \eqref{dist} with
\[
p_k = \frac{{n - 1\choose r_k - 1}{m - 1 \choose c_k - 1}}{{n - 1 
    \choose r_k - 1}{m - 1 \choose c_k - 1} + {n - 1 \choose r_k}{m -
    1 \choose c_k}} =  \frac{r_k \cdot c_k}{r_k \cdot c_k + (n - r_k)(m - c_k)}.
\]
 Step \ref{step2} rejects the
tables whose $(i_0, j_0)$th column sum is not $l_0$. This implies that after Step \ref{step2},
the marginal distribution of the $(i_0, j_0)$th column is the same as the conditional
distribution of $Z$ defined by \eqref{dist} with
\[
p_k =\frac{r_k \cdot c_k}{r_k \cdot c_k + (n - r_k)(m - c_k)}.
\]
\end{proof}

\begin{rem}
The sequential importance sampling via CP for sampling a two-way zero-one
table  defined in \cite{chen2005} is a special case of our SIS procedure.  
We can induce $p_k$ defined in \eqref{pr} and  the weights defined in
\eqref{wt} to the weights for two-way zero-one contingency tables
defined in \cite{chen2005}.  Note that when we 
consider two-way zero-one contingency tables we have 
$c_k = 1 $ for all $k = 1, \ldots, l$ and for all $j_0 = 1, \ldots, n$ (or 
$r_k = 1 $ for all $ k = 1, \ldots, l$ and for all $i_0 = 1, \ldots,
m$), and $m = 2$ (or $n = 2$, respectively).  
Therefore when we consider the two-way zero-one tables we get
\[
p_k = \frac{r_k}{n}, \, w_k = \frac{r_k}{n - r_k},
\]
or respectively
\[
p_k = \frac{c_k}{m}, \, w_k = \frac{c_k}{m - c_k}.
\]
\end{rem}

During the intermediary steps of our SIS procedure via CP on a three-way zero-one table 
there will be
some columns for the $L$ factor with trivial cases.  In that case we
have to treat them as structural zeros in the $k$th slice for some $k
\in \{1, \ldots , l\}$.  In that case we have to use the
probabilities for the distribution in \eqref{dist} as follows:
\begin{equation}\label{pr3}
p_k := \frac{r_k \cdot c_k}{r_k \cdot c_k + (n - r_k - g_k^{r_0})(m - c_k
  - g_k^{c_0})},
\end{equation}
where $g_k^{r_0}$ is the number of structural zeros in the $(r_0, k)$th
column and $g_k^{c_0}$ is the number of structural zeros in the $(c_0,
k)$th
column.
Thus we have weights:
\begin{equation}\label{wt3}
w_k = \frac{r_k \cdot c_k}{(n - r_k - g_k^{r_0})(m - c_k - g_k^{c_0})}.
\end{equation}

\begin{thm}\label{main_str1}
For the uniform distribution over all $m\times n \times l$ 
zero-one tables with structural zeros with given marginals  $r_k = X_{i_0+k}, \, c_k = X_{+j_0k}$ for
$k = 1, 2, \ldots, l$, and a fixed marginal for
the factor $L$, $l_0$, the marginal distribution of the fixed marginal
$l_0$ is the
same as the conditional distribution of $Z$ defined by \eqref{dist} given
$S_Z = l_0$ with 
\[
p_k := \frac{r_k \cdot c_k}{r_k \cdot c_k + (n - r_k - g_k^{r_0})(m - c_k
  - g_k^{c_0})},
\]
where $g_k^{r_0}$ is the number of structural zeros in the $(r_0, k)$th
column and $g_k^{c_0}$ is the number of structural zeros in the $(c_0,
k)$th column.
\end{thm}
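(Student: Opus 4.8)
The plan is to mirror the proof of Theorem \ref{main}, modifying the uniform sampling algorithm so that it respects the structural zeros and then recomputing the resulting per-layer Bernoulli probability.

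First I would set up the analogous two-step algorithm. For each layer $k = 1, \ldots, l$, restrict attention to the entries of the $(i_0,k)$th and $(j_0,k)$th columns that are \emph{not} structural zeros, choose $r_k$ ones among the admissible positions of the $(i_0,k)$th column and $c_k$ ones among the admissible positions of the $(j_0,k)$th column, and make these choices independently across layers; then, in a second step, reject every table whose $(i_0,j_0)$th column sum differs from $l_0$. Exactly as in Theorem \ref{main}, the tables produced before rejection are uniform over all $m \times n \times l$ zero-one tables that respect the structural zeros and carry the prescribed marginals $r_k, c_k$, so it suffices to track the marginal law of the pivot cell $X_{i_0 j_0 k}$ in each layer.

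The core computation is the probability that the pivot cell $(i_0,j_0,k)$ receives a $1$. Assuming the pivot is not itself a structural zero (otherwise the entry is forced to $0$), the $(i_0,k)$th column has $n - 1 - g_k^{r_0}$ admissible non-pivot positions and the $(j_0,k)$th column has $m - 1 - g_k^{c_0}$ admissible non-pivot positions. Counting the configurations consistent with the pivot being $1$ versus $0$ and forming the ratio gives
\[
p_k = \frac{{n-1-g_k^{r_0} \choose r_k-1}{m-1-g_k^{c_0} \choose c_k-1}}{{n-1-g_k^{r_0} \choose r_k-1}{m-1-g_k^{c_0} \choose c_k-1} + {n-1-g_k^{r_0} \choose r_k}{m-1-g_k^{c_0} \choose c_k}},
\]
and applying the elementary identity ${N \choose s-1}/{N \choose s} = s/(N-s+1)$ in each binomial factor collapses this to the claimed expression $p_k = r_k c_k / \bigl(r_k c_k + (n-r_k-g_k^{r_0})(m-c_k-g_k^{c_0})\bigr)$, which specializes to the formula of Theorem \ref{main} when $g_k^{r_0} = g_k^{c_0} = 0$.

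Finally, because the layer choices are independent, after the first step the full $(i_0,j_0)$th column is distributed as a vector $Z$ of independent Bernoulli trials with the $p_k$ above, and the rejection step conditions on $S_Z = l_0$; by \eqref{dist} this is precisely the conditional Poisson law with the weights $w_k$ of \eqref{wt3}. I expect the only real obstacle to be the bookkeeping of the structural zeros: one must verify that the pivot cell is excluded from (or, equivalently, contributes nothing to) the counts $g_k^{r_0}$ and $g_k^{c_0}$, and confirm that the admissibility constraints in the two columns interact only through the shared pivot cell, so that the row and column factors in the binomial ratio genuinely separate and the simplification above is valid.
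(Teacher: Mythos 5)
Your proposal is correct and takes essentially the same route as the paper: the paper's proof of Theorem \ref{main_str1} simply reuses the two-step rejection-sampling algorithm from Theorem \ref{main}, replacing each binomial coefficient ${n-1 \choose \cdot}$, ${m-1 \choose \cdot}$ by ${n-1-g_k^{r_0} \choose \cdot}$, ${m-1-g_k^{c_0} \choose \cdot}$ and simplifying to the claimed $p_k$, which is exactly your computation. Your write-up is in fact more explicit than the paper's (which only states the modified ratio and cites the earlier proof), including the bookkeeping point that the pivot cell must be admissible and excluded from the structural-zero counts.
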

\begin{proof}
The proof is similar to the proof for Theorem \ref{main}, just replace the
probability $p_k$ with
\[
p_k = \frac{{n - 1  - g_k^{r_0}\choose r_k - 1}{m - 1  - g_k^{c_0}\choose c_k - 1}}{{n - 1 
    - g_k^{r_0} \choose r_k - 1 }{m - 1  - g_k^{c_0} \choose c_k - 1 } + {n - 1  - g_k^{r_0}\choose r_k}{m -
    1  - g_k^{c_0}\choose c_k}} =  \frac{r_k \cdot c_k}{r_k \cdot c_k
  + (n - r_k  - g_k^{r_0})(m - c_k  - g_k^{c_0})}.
\]
\end{proof}

\begin{rem}
The sequential importance sampling via CP for sampling a two-way zero-one
table with structural zeros  defined in Theorem 1 in \cite{chen2007} is a special case of our SIS.  
We can induce $p_k$ defined in \eqref{pr3} and  the weights defined in
\eqref{wt3} to the weights for two-way zero-one contingency tables
defined in \cite{chen2007}.  Note that when we 
consider two-way zero-one contingency tables we have 
$c_k = 1 $ for all $k = 1, \ldots, l$ and for all $j_0 = 1, \ldots, n$ (or 
$r_k = 1 $ for all $ k = 1, \ldots, l$ and for all $i_0 = 1, \ldots,
m$),  $m = 2$ (or $n = 2$, respectively), and $g_k^{c_0} = 0$ (or
$g_k^{r_0}$, respectively).  
Therefore when we consider the two-way zero-one tables we get
\[
p_k = \frac{r_k}{n  - g_k^{r_0}}, \, w_k = \frac{r_k}{n - r_k - g_k^{r_0}},
\]
or respectively
\[
p_k = \frac{c_k}{m  - g_k^{c_0}}, \, w_k = \frac{c_k}{m - c_k - g_k^{c_0}}.
\]
\end{rem}

\begin{figure}[!htp]
\begin{center}
\scalebox{0.6}{
\includegraphics{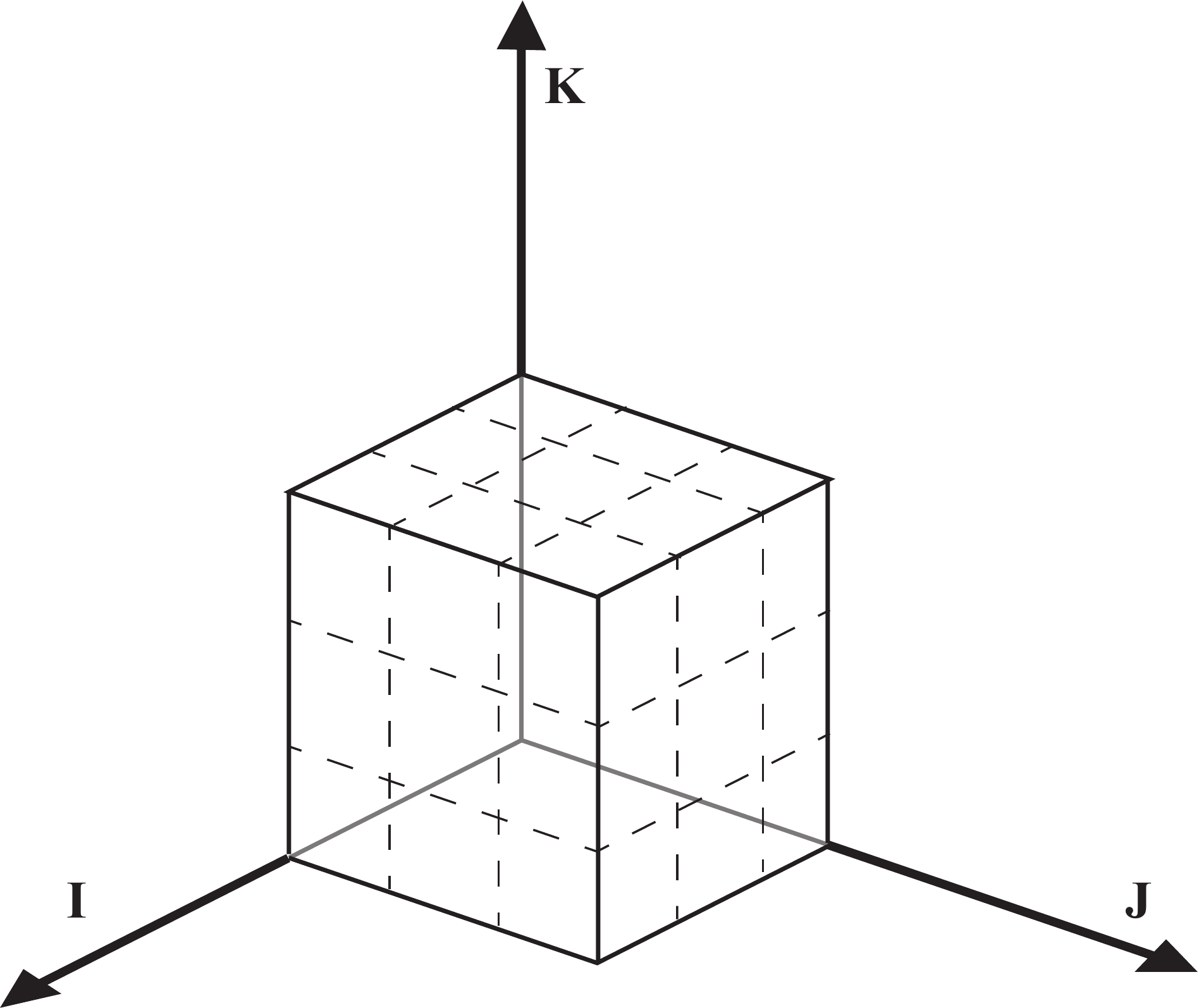}
}
\end{center}
\caption{An example of a $3\times 3\times 3$ table.}
\label{cube}
\end{figure}


\begin{alg}[Store structures in the zero-one table]\label{alg1}

This algorithm stores the structures, including zeros and ones, in
the observed table $\ve x_0$. The output will be used to avoid trivial
cases in sampling. The output $A$ and $B$ matrices both have the same
dimension with $\ve x_0$, so the cell value in $A$ will be $1$ if the
position is structured and $0$ if not. The matrix $B$ is only for structure $1$'s. We
consider sampling a table without structure $1$'s, that is,
a table with new marginals: $X^*_{ij+}=X_{ij+}-\sum_{k=1}^l
B_{ijk}=X_{ij+}-B_{ij+}$, $X^*_{i+k}=X_{i+k}-\sum_{j=1}^n
B_{ijk}=X_{i+k}-B_{i+k}$, and $X^*_{+jk}=X_{+jk}-\sum_{i=1}^m
B_{ijk}=X_{+jk}-B_{+jk}$ for $i=1,2,\ldots ,m\mbox{, }  j=1,2,\ldots
,n$, and $k = 1, 2, \ldots, l$. 

\begin{itemize}
\item[{\bf Input}] The observed marginals $X_{ij+}$, $X_{i+k}$, and $X_{+jk}$ for $i=1,2,\ldots ,m\mbox{, }  j=1,2,\ldots ,n$, and $k = 1, 2, \ldots, l$.
\item[{\bf Output}] Matrix $A$ and $B$, new marginals $X^*_{ij+}$, $X^*_{i+k}$, and $X^*_{+jk}$ for $i=1,2,\ldots ,m\mbox{, }  j=1,2,\ldots ,n$, and $k = 1, 2, \ldots, l$.
\item[{\bf Algorithm}]
\begin{enumerate}
\item
Check all marginals in direction I. For $i = 1, 2, \ldots, m$:\\
If $X_{+jk} = 0$, $A_{i'jk} = 1$, for all $i' = 1, 2, \ldots, m$ and $A_{i'jk}=0$;\\
If $X_{+jk} = 1$, $A_{i'jk} = 1$ and $B_{i'jk} = 1$, for all $i' = 1, 2, \ldots, m$ and $A_{i'jk}=0$.\\
\item
Check all marginals in direction J. For $j = 1, 2, \ldots, n$:\\
If $X_{i+k} = 0$, $A_{ij'k} = 1$, for all $j' = 1, 2, \ldots, n$ and $A_{ij'k}=0$;\\
If $X_{i+k} = 1$, $A_{ij'k} = 1$ and $B_{ij'k} = 1$, for all $j' = 1, 2, \ldots, n$ and $A_{ij'k}=0$.\\
\item
Check all marginals in direction K. For $k = 1, 2, \ldots, l$:\\
If $X_{ij+} = 0$, $A_{ijk'} = 1$, for all $k' = 1, 2, \ldots, l$ and $A_{ijk'}=0$;\\
If $X_{ij+} = 1$, $A_{ijk'} = 1$ and $B_{ijk'} = 1$, for all $k' = 1, 2, \ldots, l$ and $A_{ijk'}=0$.\\
\item 
If any changes made in step (1), (2) or (3), come back to (1), else stop.
\item
Compute new marginals: \\
$X^*_{ij+}=X_{ij+}-B_{ij+}$, $X^*_{i+k}=X_{i+k}-B_{i+k}$, and
$X^*_{+jk}=X_{+jk}-B_{+jk}$ for $i=1,2,\ldots ,m\mbox{, }
j=1,2,\ldots ,n$, and $k = 1, 2, \ldots, l$. 
\end{enumerate}
\end{itemize}
\end{alg}

\begin{alg}[Generate a two-way table with given marginals]\label{alg2}

This algorithm is used to generate a layer (fixed $i$) of the three-way
table, with the probability of the sampled layer. 

\begin{itemize}
\item[{\bf Input}] Row sums $r^*_j$ and column sums $c^*_k$, $ j=1,2,\ldots
  ,n$, and $k = 1, 2, \ldots, l$; structures $A$; marginals on direction
  I: $X_{+jk}$ for $i=1,2,\ldots ,m$. 
\item[{\bf Output}] A sampled table and its probability. Return $0$ if the process fails.
\item[{\bf Algorithm}]
\begin{enumerate}
\item
Order all columns with decreasing sums.
\item
Generate the column (along the direction $K$) with the largest sum,
the weights used in CP are shown in equation \eqref{wt3}. Notice that
$k$ relates to each specific cell in the column,  $r_k$ and $c_k$
which are 
the row sums in the direction $J$ and $I$, respectively.  $g^{r_0}_k$
and $g^{c_0}_k$ are the number of structures in the rows of the
direction $J$ and $I$, respectively. The probability of the generated
column will be returned if the process succeeds, while $0$ may be
returned in this step if it does not exist. 
\item
Delete the generated column in (2), and for the remaining subtable, do
the following:
\begin{enumerate}
\item
If only one column is left, fill it with fixed marginals and go to (4).
\item
If (a) is not true, check all marginals to see if there are any new structures caused
by step (2). We need to avoid trivial cases by doing this. Go back to
(1) with new marginals and structures.  
\end{enumerate}
\item
Return generated matrix as the new layer and its CP probability. If failed, return $0$.
\end{enumerate}
\end{itemize}
\end{alg}

\begin{alg}[SIS with CP for sampling a three-way zero-one table]\label{alg3}

We describe an algorithm to sample a three-way zero-one table $\ve X$ with given
marginals $X_{ij+}$, $X_{i+k}$, and $X_{+jk}$ for $i=1,2,\ldots ,m\mbox{, }  
j=1,2,\ldots ,n$, and $k = 1, 2, \ldots, l$ via the SIS with CP.

\begin{itemize}
\item[{\bf Input}] The observed table $\ve x_0$.
\item[{\bf Output}] The sampled table $\ve x$.
\item[{\bf Algorithm}]
\begin{enumerate}
\item
Compute the marginals $X_{ij+}$, $X_{i+k}$, and $X_{+jk}$ for $i=1,2,\ldots ,m\mbox{, }  
j=1,2,\ldots ,n$, and $k = 1, 2, \ldots, l$.
\item
Use Algorithm \ref{alg1} to compute the structure tables $A$ and 
$B$. Consider the new marginals in the output as the sampling
marginals. 
\item
For the sampling marginals, do the SIS:
\begin{enumerate}
\item
Delete the layers filled by structures; consider the left-over subtable.
\item
Consider the layers in direction $I$ ($i$ varies). Sum within all
layers and order them from the largest to smallest. 
\item
Consider the layer with the largest sum and plug in the structure table
$A$ from Algorithm \ref{alg2} to generate a sample for this
layer. The algorithm may return $0$ if the sampling fails. 
\item
Delete the generated layer in (c), and for the remaining subtable, do the following:
\begin{enumerate}
\item
If only one layer left, fill it with fixed marginals and go to (e).
\item
else, go back to (2) with new marginals.
\end{enumerate}
\item
Add the sampled table with table $B$ (the structure $1$'s table).
\end{enumerate}
\item
Return the table in (e) and the same probability with the sampled table. Return $0$ if failed.
\end{enumerate}
\end{itemize}
\end{alg}

\section{Four or higher dimensional zero-one tables}\label{4dim}

In this section we consider a $d$-way zero-one table under the no
$d$-way interaction model for $d \in \N$ and $d > 3$.  
Let $\ve X=(X_{i_1 \ldots i_d})$ be a zero-one contingency table of size
$(n_1 \times \cdots \times n_d)$, where $n_i \in \N$ for $i = 1,
\ldots, d$.  
The sufficient statistics under  the no
$d$-way interaction model are
\begin{equation}\label{margin}
\begin{array}{l}
X_{+i_2\ldots i_d}, \, X_{i_1+i_3\ldots i_d}, \, \ldots , X_{i_1\ldots
  i_{d-1}+},\\
\text{for } i_1 = 1, \ldots, n_1, \, i_2 = 1, \ldots, n_2, \ldots , i_d
= 1, \ldots, n_d.\\
\end{array}
\end{equation}

For each $i_1^0 \in \{1, \ldots, n_1\}, \ldots , i_{d-1}^0 \in \{1,
\ldots, n_{d}\}$, we say the column of the entries for a marginal
$X_{i_1 \ldots i_{j - 1}+ i_{j+1}\ldots i_d }$ the $(i_0, \ldots ,
i_{j-1}, i_{j+1}, \ldots , i_d)$th column of $\ve X$.
For each $i_1^0 \in \{1, \ldots, n_1\}, \ldots , i_{d-1}^0 \in \{1,
\ldots, n_{d-1}\}$, we consider the $(i_1^0, \ldots, i_{d-1}^0)$th
column for the $d$th factor.  Let $l_0 = X_{i_1^0, \ldots, i_{d-1}^0+}$.  
Let $r_k^j = X_{i_1^0 \ldots i_{j-1}^0+i_{j+1}^0\ldots i_{d-1}^0k}$ for fixed $k \in
\{1, \ldots , n_d\}$.
For sampling a zero-one $d$-way table $\ve X$, we set
\begin{equation}\label{pr2}
p_k := \frac{\prod_{j = 1}^{d-1} r_k^j}{\prod_{j = 1}^{d-1} r_k^j + \prod_{j
    = 1}^{d-1}(n_j - r_k^j)}.
\end{equation}

\begin{rem}
We assume that we do not have trivial cases, namely,  $1 \leq r_k^j
\leq n_j-1$ for $j = 1, \ldots , d$. 
\end{rem}

\begin{thm}
For the uniform distribution over all $d$-way zero-one contingency
tables $\ve X=(X_{i_1 \ldots i_d})$ of size
$(n_1 \times \cdots \times n_d)$, where $n_i \in \N$ for $i = 1,
\ldots, d$ with marginals $l_0 = X_{i_1^0, \ldots, i_{d-1}^0+}$,
and $r_k^j = X_{i_1^0 \ldots i_{j-1}^0+i_{j+1}^0\ldots i_{d-1}^0k}$ for $k \in
\{1, \ldots , n_d\}$, the marginal distribution of the fixed marginal
$l_0$ is the
same as the conditional distribution of $Z$ defined by \eqref{dist} given
$S_Z = l_0$ with 
\[
p_k :=  \frac{\prod_{j = 1}^{d-1} r_k^j}{\prod_{j = 1}^{d-1} r_k^j + \prod_{j
    = 1}^{d-1}(n_j - r_k^j)}.
\]

\end{thm}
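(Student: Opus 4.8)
The plan is to follow the proof of Theorem~\ref{main} essentially verbatim, replacing the two line-sum constraints (the row and column of the three-way slice) by the $d-1$ line-sum constraints of a $(d-1)$-dimensional slice. First I would exhibit the analogous two-step generating procedure: for each $k = 1, \ldots, n_d$, and independently across the $n_d$ slices obtained by fixing the last coordinate, fill the slice uniformly at random subject only to the $d-1$ marginal constraints $r_k^j$, $j = 1, \ldots, d-1$, that pass through the target cell $(i_1^0, \ldots, i_{d-1}^0, k)$; then reject every table whose target-column sum $\sum_k X_{i_1^0 \ldots i_{d-1}^0 k}$ differs from $l_0$. Exactly as in Theorem~\ref{main}, the output of the accept step is uniform over the tables with the prescribed marginals, so it suffices to read off the marginal law of the target column from this procedure.

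The heart of the argument is the per-slice computation of $p_k = P(X_{i_1^0\ldots i_{d-1}^0 k} = 1)$. Here I would first record the geometric fact that the $d-1$ lines through the target cell---the line in direction $j$ being $\{(i_1^0, \ldots, i_{j-1}^0, i_j, i_{j+1}^0, \ldots, i_{d-1}^0, k) : i_j = 1, \ldots, n_j\}$---meet pairwise only at the target cell itself: a cell lying on both the direction-$j$ and the direction-$j'$ line must have its $j$th coordinate equal to $i_j^0$ and its $j'$th coordinate equal to $i_{j'}^0$, while all remaining coordinates are already pinned to the base values, so it coincides with the target cell. Consequently, off the target cell the cells belonging to distinct lines are disjoint, and once the value of the target cell is fixed the $d-1$ line sums constrain those disjoint blocks independently. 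Since the uniform distribution on full tables restricts to the uniform distribution on valid fillings of these lines (the remaining cells of the slice being unconstrained and contributing an identical factor in either case), a straightforward count gives
\[
p_k = \frac{\prod_{j=1}^{d-1}{n_j - 1 \choose r_k^j - 1}}{\prod_{j=1}^{d-1}{n_j - 1 \choose r_k^j - 1} + \prod_{j=1}^{d-1}{n_j - 1 \choose r_k^j}},
\]
where the numerator counts the fillings with target cell equal to $1$ and the second summand of the denominator counts those with target cell equal to $0$. Applying the identity ${n_j - 1 \choose r_k^j - 1}/{n_j - 1 \choose r_k^j} = r_k^j/(n_j - r_k^j)$ factorwise and clearing denominators turns this into the claimed value $p_k = \prod_{j=1}^{d-1} r_k^j \,/\, \bigl(\prod_{j=1}^{d-1} r_k^j + \prod_{j=1}^{d-1}(n_j - r_k^j)\bigr)$.

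Finally I would assemble the slices. Because the generating procedure treats the $n_d$ slices independently and their constraint sets occupy disjoint cells, before rejection the target column $(Z_1, \ldots, Z_{n_d})$ consists of independent Bernoulli variables with success probabilities $p_k$; conditioning on $S_Z = l_0$ through the accept step then yields exactly the conditional law \eqref{dist}, since $P(Z_1 = z_1, \ldots, Z_{n_d} = z_{n_d} \mid S_Z = l_0) \propto \prod_k p_k^{z_k}(1-p_k)^{1 - z_k} \propto \prod_k w_k^{z_k}$ with $w_k = p_k/(1 - p_k)$.

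The main obstacle, as I see it, is not any single computation but justifying the factorization in the display: one must be certain that fixing the target cell decouples the $d-1$ line constraints into an independent product of binomial counts. In the three-way case this is immediate, since a row and a column of a matrix meet in a single entry, but in general it rests on the pairwise-intersection observation above, and I would want to state and verify that observation carefully before invoking it. Everything else is a direct transcription of the proof of Theorem~\ref{main}.
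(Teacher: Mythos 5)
Your proposal is correct and takes essentially the same approach as the paper: the paper's proof of this theorem is a one-line extension of the rejection-sampling argument of Theorem~\ref{main} (independent uniform filling of the $d-1$ lines in each layer, then rejection on the column sum), culminating in exactly the binomial-ratio formula for $p_k$ that you derive. The only difference is that you spell out the pairwise-intersection observation that justifies factoring the counts into products of binomial coefficients, a detail the paper leaves implicit.
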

\begin{proof}
The proof is similar to the proof for Theorem \ref{main}, we just extend
the same argument to a $d$-way zero-one table under the no $d$-way
interaction model  with the probability
\[
p_k = \frac{\prod_{j = 1}^{d-1}{n_j - 1\choose r_k^j - 1}}{\prod_{j = 1}^{d-1}{n_j - 1 
    \choose r_k^j - 1} + \prod_{j=1}^{d-1}{n_j - 1 \choose r_k^j}} =
\frac{\prod_{j = 1}^{d-1} r_k^j}{\prod_{j = 1}^{d-1} r_k^j + \prod_{j   = 1}^{d-1}(n_j - r_k^j)}.
\]
\end{proof}

During the intermediary steps of our SIS procedure via CP on a three-way zero-one table 
there will be
some columns for the $d$th factor with trivial cases.  In that case we
have to treat them as structural zeros in the $k$th slice for some $k
\in \{1, \ldots , l\}$.  In that case we have to use the
probabilities for the distribution in \eqref{dist} as follows:
\begin{equation}\label{pr4}
p_k := \frac{\prod_{j = 1}^{d-1} r_k^j }{\prod_{j = 1}^{d-1} r_k^j + \prod_{j
    = 1}^{d-1}(n_j - r_k^j - g_k^j)}.
\end{equation}
where $g_k^{j}$ is the number of structural zeros in  the $(i_1^0, \ldots ,
i_{j-1}^0, i_{j+1}^0, \ldots , i_{d-1}^0k)$th column of $\ve X$.
Thus we have weights:
\begin{equation}\label{wt4}
w_k = \frac{\prod_{j = 1}^{d-1}r_k^j}{\prod_{j
    = 1}^{d-1}(n_j - r_k^j - g_k^j)}.
\end{equation}

\begin{thm}\label{main_str2}
For the uniform distribution over all $d$-way zero-one contingency
tables $\ve X=(X_{i_1 \ldots i_d})$ of size
$(n_1 \times \cdots \times n_d)$, where $n_i \in \N$ for $i = 1,
\ldots, d$ with marginals $l_0 = X_{i_1^0, \ldots, i_{d-1}^0+}$,
and $r_k^j = X_{i_1^0 \ldots i_{j-1}^0+i_{j+1}^0\ldots i_{d-1}^0k}$ for $k \in
\{1, \ldots , n_d\}$, the marginal distribution of the fixed marginal
$l_0$ is the
same as the conditional distribution of $Z$ defined by \eqref{dist} given
$S_Z = l_0$ with 
\[
p_k :=  \frac{\prod_{j = 1}^{d-1} r_k^j}{\prod_{j = 1}^{d-1} r_k^j + \prod_{j
    = 1}^{d-1}(n_j - r_k^j - g_k^j)}
\]
where $g_k^{j}$ is the number of structural zeros in the $(i_1^0, \ldots ,
i_{j-1}^0, i_{j+1}^0, \ldots , i_{d-1}^0k)$th column of $\ve X$.
\end{thm}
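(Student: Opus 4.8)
The plan is to reuse, essentially verbatim, the two-step counting argument of Theorem~\ref{main}, combining the $d$-dimensional extension carried out in the preceding theorem of this section with the structural-zero modification of Theorem~\ref{main_str1}. First I would set up the analogous sampling scheme for the target cell $(i_1^0,\ldots,i_{d-1}^0,k)$: for each slice $k\in\{1,\ldots,n_d\}$, and independently across slices, I populate the $d-1$ lines through that cell by choosing, in direction $j$, exactly $r_k^j$ ones among the $n_j-g_k^j$ non-structural positions of the corresponding line, and then accept only those configurations whose target column sum equals $l_0$. As in Theorem~\ref{main}, the acceptance step is precisely the conditioning on $S_Z=l_0$, so everything reduces to showing that after the independent first step the entry $X_{i_1^0\ldots i_{d-1}^0 k}$ equals $1$ with probability $p_k$.

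Next I would compute this single-cell probability by counting admissible completions. Conditioning on the value of the target cell, the number of ways to complete the line in direction $j$ is $\binom{n_j-1-g_k^j}{r_k^j-1}$ when the cell carries a $1$ and $\binom{n_j-1-g_k^j}{r_k^j}$ when it carries a $0$; the $-1$ in the upper argument removes the target cell itself from the available pool and the $-g_k^j$ removes the structural zeros on that line. Because the $d-1$ directions are filled independently, these counts multiply, so the probability of a $1$ is the ratio of $\prod_{j=1}^{d-1}\binom{n_j-1-g_k^j}{r_k^j-1}$ to the sum of that product and $\prod_{j=1}^{d-1}\binom{n_j-1-g_k^j}{r_k^j}$, exactly the expression appearing in the proofs of the earlier theorems.

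It then remains to perform the algebraic simplification. I would divide numerator and denominator of this ratio by $\prod_{j=1}^{d-1}\binom{n_j-1-g_k^j}{r_k^j}$ and apply the elementary identity $\binom{N}{r-1}/\binom{N}{r}=r/(N-r+1)$ with $N=n_j-1-g_k^j$, which turns each factor into $r_k^j/(n_j-r_k^j-g_k^j)$ and collapses the whole expression to the closed form for $p_k$ stated in the theorem, matching \eqref{pr4}; the weights \eqref{wt4} are then read off immediately.

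I do not expect a genuine obstacle, since the argument is structurally identical to the previous proofs and the binomial identity is routine. The only point that warrants care is the bookkeeping for structural zeros across several directions simultaneously: one must confirm that $g_k^j$ counts exactly the structural zeros lying on the relevant line in direction $j$, that the target cell is itself free (otherwise its entry is forced and there is nothing to prove along that slice), and that feasibility guarantees $r_k^j\le n_j-g_k^j$, so that every binomial in sight is well-defined and the independence-across-directions product structure is preserved.
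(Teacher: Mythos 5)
Your proposal is correct and follows essentially the same route as the paper: the paper proves Theorem~\ref{main_str2} by extending the two-step accept/reject argument of Theorems~\ref{main} and~\ref{main_str1} to $d$ dimensions, computing the single-cell probability as the ratio
\[
p_k = \frac{\prod_{j = 1}^{d-1}\binom{n_j - 1 - g^j_k}{r_k^j - 1}}{\prod_{j = 1}^{d-1}\binom{n_j - 1 - g^j_k}{r_k^j - 1} + \prod_{j=1}^{d-1}\binom{n_j - 1 - g^j_k}{r_k^j}}
= \frac{\prod_{j = 1}^{d-1} r_k^j}{\prod_{j = 1}^{d-1} r_k^j + \prod_{j = 1}^{d-1}(n_j - r_k^j - g^j_k)},
\]
which is exactly the independent-lines-per-slice counting and binomial simplification you describe. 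Your added remarks on verifying that the target cell is non-structural and that $r_k^j \le n_j - g_k^j$ are sensible bookkeeping points that the paper leaves implicit.
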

\begin{proof}
The proof is similar to the proof for Theorem \ref{main_str1}, we
just extend
the same argument to a $d$-way zero-one table under the no $d$-way
interaction model  with the probability
\[
p_k = \frac{\prod_{j = 1}^{d-1}{n_j - 1 - g^j_k\choose r_k^j - 1}}{\prod_{j = 1}^{d-1}{n_j - 1  - g^j_k
    \choose r_k^j - 1} + \prod_{j=1}^{d-1}{n_j - 1  - g^j_k\choose r_k^j}} =
\frac{\prod_{j = 1}^{d-1} r_k^j}{\prod_{j = 1}^{d-1} r_k^j + \prod_{j
    = 1}^{d-1}(n_j - r_k^j  - g^j_k)}.
\]
\end{proof}

\section{Computational examples}\label{comp}

For our simulation study we used the software package {\tt R} \cite{Rproj}.
We count the {\em exact} numbers of tables
via the software {\tt LattE} \cite{latte-1.2} for small examples in
this section (Examples \eqref{firstex} to \eqref{lastex}). 


When the contingency tables are large and/or the models are complicated, it
is very difficult to obtain the exact number of tables.  Thus we need a good
measurement of accuracy in the estimated number of tables. 
In \cite{chen2005}, they used the coefficient of variation
($cv^2$):
\[
cv^2 = \frac{var_q\{p({\bf X})/q({\bf X})\}}{\E^2_q\{p({\bf X})/q({\bf X})\}}
\]
which is equal to $var_q\{1/q({\bf X})\}/\E^2_q\{1/q({\bf X})\}$ for
the problem of estimating the number of tables.  The value of $cv^2$ is simply
the chi-square distance between the 
two distributions $p'$ and $q$, which means the smaller it is, the closer the two
distributions are.
In \cite{chen2005} they estimated $cv^2$ by:
\[
cv^2 \approx \frac{\sum_{i = 1}^N \{1/q({\bf X_i}) - \left[\sum_{j =1}^N 1/q({\bf X_j}) \right] / N  \}^2 /(N - 1)}{\left\{\left[ \sum_{j = 1}^N 1/q({\bf X_j})\right] / N\right\}^2},
\]
where ${\bf X_1}, \ldots , {\bf X_N}$ are tables drawn iid from
$q({\bf X})$.   When we have rejections, we compute the variance using
only accepted tables.  In this paper we also investigated
relations with the
exact numbers of tables and $cv^2$ when we have rejections.

In this section, we define the three two-way marginal matrices as following:\\
Suppose we have an observed table $\ve x=(x_{ijk})_{m \times n \times l}$, $i=1,2,\ldots ,m\mbox{, }  
j=1,2,\ldots ,n$, and $k = 1, 2, \ldots, l$;\\
Define:
$si=(X_{+jk})_{n\times l}$, $sj=(X_{i+k})_{m\times l}$, and
$sk=(X_{ij+})_{m\times n}$. 

\begin{ex}[The 3-dimension Semimagic Cube]\label{ex_1}
Suppose $si$, $sj$, and $sk$ are all $3\times 3$ matrices with all 1's inside, that is:
\[
si=sj=sk=
\begin{array}{|c|c|c|}\hline
1 &  1 &  1\\\hline
1 &  1 &  1\\\hline
1 &  1 &  1\\\hline
\end{array}
\]
The real number of tables is $12$. We took $114.7$ seconds to run
$10,000$ samples in the SIS, the estimator is $12$, acceptance rate is $100$\%. Actually, we found that if the acceptance rate is $100$\%, then sample size does not matter in the estimation.
\end{ex}

We used {\tt R} to produce more examples. Examples \eqref{firstex} to \eqref{lastex}
are constructed by the same code but with different values for
parameters. We used the {\tt R} package ``Rlab'' for the following code.
\begin{verbatim}
    seed=6; m=3; n=3; l=4; prob=0.8; N=1000; k=200
    set.seed(seed)
    A=array(rbern(m*n*l,prob),c(m,n,l))
    outinfo=tabinfo(A)
    numtable(N,outinfo,k)
\end{verbatim}
Here prob is the probability of getting $1$ for every Bernoulli
variable, and $N$ is the sample size (the total number of tables
sampled, including both acceptances and rejections).  
Notice that
$cv^2$ is defined as $\frac{Var}{Mean^2}$. 

\begin{ex}[seed=6; m=3; n=3; l=4; prob=0.8]\label{firstex}
Suppose $si$, $sj$, and $sk$ are as following, respectively:
\[                                                                                                           

\vskip 0.1in
\caption{Summary of computational results on $m \times n\times l$ tables for $m=n=l = 4,
\ldots, 10$. 
  The all marginal sums are equal to one in this example.}\label{tab_res2}
\end{center}
\end{table}

\end{ex}

\begin{ex}[High-dimension Semimagic Cubes continues]\label{lastex3}\label{eg5_16}
In this example, we consider $m \times n\times l$ tables for $m=n=l = 4,
\ldots, 10$ such that each marginal sum equals to $s$. The results are
summarized in Table \ref{tab_res3}. In this example, we set the sample size $N = 1000$.

\begin{table}[!htp]
\begin{center}
\begin{tabular}{ccrrrr}
\toprule[1.2pt] %
Dimension $m$ & $s$ &CPU time (sec)& Estimation & $cv^2$ & Acceptance rate \\\hline
$4$ & $2$ & $27.1$ & 51810.36 & 0.66 & $97.7\%$\\\hline
$5$ & $2$ & $58.1$ & 25196288574 & 1.69 & $97.5\%$\\\hline
$6$ & $2$ & $97.1$ & 6.339628e+18 & 2.56 & $94.8\%$\\
\     & $3$ & $99.3$ & 1.269398e+22&2.83 & $96.5\%$\\\hline

$7$ & $2$ & $150.85$ & 1.437412e+30 & 4.76 & $93.1\%$\\
\     & $3$ & $166.68$ & 2.365389e+38 & 25.33 & $96.7\%$\\\hline

$8$ & $2$ & $ 229.85$ & 5.369437e+44 & 6.68 & $89.8\%$\\
\     & $3$ & $ 256.70$ & 3.236556e+59 & 7.05 & $94.5\%$\\
\     & $4$ & $328.52$ &2.448923e+64  & 11.98 & $94.3\%$\\\hline

$9$ & $2$ & $319.32$ & 4.416787e+62 & 8.93 & $85.7\%$\\
\     & $3$ & $376.67$ & 7.871387e+85& 15.23 & $91.6\%$\\
\     & $4$ & $549.73$ & 2.422237e+97 &14.00 & $93.4\%$\\\hline

$10$ & $2$ & $429.19$ & 2.166449e+84 & 10.46 & $83.3\%$\\
\       & $3$ & $527.14$ & 6.861123e+117 &26.62& $90\%$ \\

\       & $4$ & $883.34$ & 3.652694e+137& 33.33 & $93.8\%$\\
\       & $5$ & $1439.50$&  1.315069e+144& 46.2& $91.3\%$\\\bottomrule[1.2pt]
\end{tabular}
\vskip 0.1in
\caption{Summary of computational results on $m \times n\times l$ tables for $m=n=l = 4,
\ldots, 10$. 
  The all marginal sums are equal to $s$ in this example.  The sample
  $N = 1000$ in this example. }\label{tab_res3}
\end{center}
\end{table}


\end{ex}

\begin{ex}[Bootstrap-t confidence interval of Semimagic Cubes]\label{lastex4}\label{eg5_17}
As we can see that in Table \ref{tab_res3}, generally we have larger
$cv^2$ when the number of tables is larger, and in this case, the
estimator we get via the SIS procedure might vary greatly in different
iterations. Therefore, we might want to compute a $(1-\alpha)100\%$ confidence interval
for each estimator via a non-parametric bootstrap method (see
Appendix \ref{nonpara} for a pseudo code for a non-parametric
bootstrap method to get the $(1-\alpha)100\%$ confidence interval for
$|\Sigma|$).  
See Table \ref{tab_res5} for some results of Bootstrap-t $95\%$
confidence intervals ($\alpha = 0.05$).

\begin{table}[!htp]
\begin{center} {\footnotesize
\begin{tabular}{|c|c|rrr|rrr|r|}
\toprule[1.2pt]
 & & \multicolumn{3}{c|}{Estimation} & \multicolumn{3}{c|}{$cv^2$} & \\\cline{3-8}

Dim & s  & \multicolumn{1}{c|}{$\widehat{|\Sigma|}$} & \multicolumn{1}{c}{Lower $95\%$} &
\multicolumn{1}{c|}{Upper $95\%$} & \multicolumn{1}{c|}{$\widehat{cv^2}$} & \multicolumn{1}{c}{Lower $95\%$} & \multicolumn{1}{c|}{Upper $95\%$} & \multicolumn{1}{c|}{Acceptance Rate} \\\midrule[1.2pt]

7 & 2 & 1.306480e+30 & 1.156686e+30 & 1.468754e+30 & 3.442306 & 2.678507 & 4.199513 & $93.3\%$ \\
   & 3 & 3.033551e+38 & 2.245910e+38 & 4.087225e+38 & 22.84399 & 8.651207 & 35.080408 & $96.2\%$ \\
\hline

8 & 2 & 5.010225e+44 & 4.200752e+44 & 5.902405e+44 & 6.712335 & 4.539368 & 8.590578 & $90.4\%$ \\
   & 3 & 2.902294e+59 & 2.389625e+59 & 3.484405e+59 & 9.047914 & 5.680128 & 12.797488 & $93.1\%$ \\
   & 4 & 2.474874e+64 & 1.847911e+64 & 3.295986e+64 & 21.53559 & 5.384647 & 32.166086 & $94.6\%$ \\
\hline

9 & 2 & 4.548401e+62 & 3.682882e+62 & 5.593370e+62 & 10.07973 & 4.886817 & 15.406899 & $87.1\%$ \\
   & 3 & 9.702672e+85 & 7.189849e+85 & 1.250875e+86 & 18.65302 & 11.33462 & 23.77980 & $92.5\%$ \\
   & 4 & 2.023034e+97 & 1.547951e+97 & 2.561084e+97 & 14.96126 & 10.20331 & 19.09515 & $92.2\%$ \\
\hline

10 & 2 & 2.570344e+84 & 1.908609e+84 & 3.339243e+84 & 17.83684 & 9.785778 & 24.231544 & $84.8\%$ \\
   & 3 & 8.68783e+117 & 5.92233e+117 & 1.22271e+118 & 29.67200 & 18.64549 & 37.64892 & $90.2\%$ \\
   & 4 & 4.12634e+137 & 2.94789e+137 & 5.52727e+137 & 23.36831 & 15.32719 & 31.02614 & $92\%$ \\
   & 5 & 1.54956e+144 & 9.85557e+143 & 2.24043e+144 & 39.06521 &  20.23674 & 53.60838 & $91.8\%$ 
\\\bottomrule[1.2pt]

\end{tabular} }
\end{center}
\vskip 0.1in
\caption{Summary of confidence intervals. Dimensions and marginals$=s$
  are defined same with Table \ref{tab_res3}.  $\widehat{|\Sigma|}$
  means an estimator of ${|\Sigma|}$ and $\widehat{cv^2}$ is an
  estimator of $cv^2$. The sample size for the SIS procedure
  is $N = 1000$ and the sample size for bootstraping is $B=5000$. Only cases with
  relatively large $cv^2$ are involved.} 
\label{tab_res5}
\end{table}

\end{ex}

\section{Experiment with Sampson's data set}\label{sam}

Sampson recorded the social interactions among a group of monks
when he was visiting there as an experimenter on vision. He collected numerous
sociometric rankings \cite{Breiger,sampson}.  The data is organized
as a $18 \times 18\times 
10$ table and one can find the full data sets at
\url{http://vlado.fmf.uni-lj.si/pub/networks/data/ucinet/UciData.htm#sampson}. 
Each layer of $18 \times 18$ table represents a social relation
between 18 monks at some time point.  
Most of the present data are retrospective, collected after the
breakup occurred. They concern a period during which a new cohort
entered the monastery near the end of the study but before the major
conflict began. The exceptions are ``liking'' data gathered at three
times: SAMPLK1 to SAMPLK3 - that reflect changes in group sentiment
over time (SAMPLK3 was collected in the same wave as the data
described below). 
In the data set four relations are coded, with separate matrices for positive and
negative ties on the 10 relation: esteem (SAMPES) and
disesteem (SAMPDES); liking (SAMPLK which are SAMPLK1 to SAMPLK3) and
disliking (SAMPDLK); positive 
influence (SAMPIN) and negative influence (SAMPNIN); praise (SAMPPR)
and blame (SAMPNPR).  
In the original data set they listed top three choices and recorded as
ranks.  However, we set these ranks as an indicator (i.e., if they are
in the top three choices, then we set one and else, zero).  

We ran the SIS procedure with $N = 100000$ and a bootstrap sample size
$B = 50000$.  An estimator was  1.704774e+117 with its $95\%$
confidence interval, [1.119321e+117 2.681264e+119]
and $cv^2 = 621.4$ with its $95\%$
confidence interval,  [324.29, 2959.65].  The CPU time was $70442$ seconds.  The acceptance
rate is 3\%.

\section{Discussion}\label{dis}

In this paper we do not have a
sufficient and necessary condition for the existence of the three-way
zero-one table so we cannot avoid rejection. However, since the SIS
procedure gives an unbiased estimator,  we may only need a 
small sample size as long as it converges. For example, in Table
\ref{tab_res},  all estimators with 
sample size $1000$ are exactly the same as the true numbers of tables because they
all converge very quickly. Also note 
that an acceptance rate does not depend on a sample size.  Thus, it would be
interesting to investigate the convergence rate of the SIS procedure with
CP for zero-one three-way tables.

It seems that the convergence rate is slower when we have a ``large''
table (here ``large'' means in terms of $|\Sigma|$ rather than
its dimension, i.e., the number of cells).  A large estimator 
$\widehat{|\Sigma|}$ usually corresponds to a larger $cv^2$, and this
often comes with large  
variations of $\widehat{|\Sigma|}$ and $cv^2$.   This means that if we
have a large $|\Sigma|$, more likely we get extremely  
larger $\widehat{|\Sigma|}$ and $cv^2$ and different iterations can give very different 
results. For example, we ran three iterations for the $8\times 8\times 8$
semimagic cube with all marginals  
equal to $3$ and we got the following results: estimator =3.236556e+59 with
$cv^2=7.049114$; estimator =2.902294e+59
with $cv^2=9.047914$; and estimator =3.880133e+59 with
$cv^2=55.59179$. Fortunately, even though we have a  
large $|\Sigma|$, our acceptance rate is still high and a
computational time seems to still be  
attractive.  Thus, when one finds a large estimation or a large $cv^2$,
we recommend to
apply several iterations and pick the result with the smallest $cv^2$.  We should 
always compare $cv^2$ in a large scale.  However, a small improvement does
not necessarily mean a  
better estimator  (see Example \ref{eg5_10}). 

For calculating the bootstrap-t confidence 
intervals,  we often have a larger confidence interval when we have a larger 
$cv^2$, and this confidence interval might be less informative and less
reliable. Therefore we suggest to use the result with the smallest
$cv^2$ for bootstraping  
procedure. In Table \ref{tab_res5} we showed only confidence intervals 
for 
semimagic cubes with $m=n = l =  7, \ldots ,10$ in Example
\ref{eg5_17} because of the following reason: When
$cv^2$ is very small, computing bootstrap-t confidence interval  
does not make much sense, since the estimation has already converged.

For an experiment with Sampson's data set, we have observed a very low
acceptance rate compared with experimental studies on simulated data
sets.  We are investigating why this happens and how to increase the
acceptance rates.

In \cite{chen2005}, the Gale--Ryser Theorem was used to obtain an SIS
procedure without rejection for two-way zero-one tables.  However,
for three-way table cases, it seems very difficult because we
naturally have structural zeros and trivial cases on a process of
sampling one table.  In \cite{chen2007} Chen showed a version of
Gale--Ryser Theorem for structural zero for two-way zero-one tables, but
it assumes that there is at most one structural zero in each row and
column.  In general there are usually more than one in each row and column.

In this paper the target distribution is the uniform distribution.  We
are sampling a table from the set of all zero-one tables satisfying
the given marginals as close as uniformly via the SIS procedure with CP.
For a goodness-of-fit test one might want to sample a table from the
set of all zero-one tables satisfying the given marginals with the
hypergeometric distribution.  We are currently working on how to
sample a table via the SIS procedure with CP for the hypergeometric
distribution.  



\section{Acknowledgement}
The authors would like to thank Drs.~Stephen Fienberg and Yuguo Chen for useful
conversations.  

\bibliographystyle{plain}
\bibliography{sis}

\appendix
\section{Non-parametric bootstrap method}\label{nonpara}

In this section we explain how to use a non-parametric
bootstrap method to get the $(1-\alpha)100\%$ confidence interval for
$|\Sigma|$. Notice that the bootstrap sample size is fixed as B, and
notations here are consistent with Section \ref{sec:sis}. 

\begin{itemize}
\item[\textbf{(1)}] \textbf{Drawing pseudo dataset.}
\begin{enumerate}
\item[{\bf Concept}]
In an SIS procedure with sample size N,  we get a sequence of random
tables ${\bf X_1}, \ldots , {\bf X_N}$. Define ${\bf
  Y_i}=\frac{\mathbb{I}_{{\bf X_i} \in \Sigma}}{q({\bf X_i})},\
i=1,\ldots,N$ where $q({\bf X})$ is the trial distribution, then ${\bf
  Y_1}, \ldots , {\bf Y_N}$ is a sequence of i.i.d random
variables. This means that it makes sense to consider the empirical
distribution of $\bf Y_i$, which is nonparametric maximum likelihood
estimator of the real distribution of $\bf Y_i$ (actually, as $\bf
Y_i$ can only take finitely many values, the empirical distribution
becomes the maximum likelihood estimator of the real
distribution). Draw a pseudo sample ${\bf Y^*_1}, \ldots , {\bf
  Y^*_N}$ from the empirical distribution. 
\item[{\bf Algorithm}]
Use the SIS procedure to get ${\bf Y_i}=\frac{\mathbb{I}_{{\bf X_i} \in
    \Sigma}}{q({\bf X_i})},\ i=1,\ldots,N$, which should be just a
sequence of numbers. Draw N elements from this sequence with
replacement. 
\end{enumerate}

\item[\textbf{(2)}] \textbf{One Bootstrap replication.}
\begin{enumerate}
\item[{\bf Concept}]
Consider the pseudo sample ${\bf Y^*_1}, \ldots , {\bf Y^*_N}$ as a
"new" sample from the empirical distribution, then the cumulative
distribution function (CDF) of 
$\widehat{\theta}^*=T({\bf Y^*_1}, \ldots , {\bf Y^*_N})$ is a consistent
estimator of the CDF of $\widehat{\theta}=T({\bf Y_1}, \ldots , {\bf
  Y_N})$. Here we can consider our estimator of $|\Sigma|$: 
\[
\widehat{|\Sigma|}=\widehat{\theta_1}=T_1({\bf Y_1}, \ldots , {\bf Y_N})=\frac{1}{N} \sum_{i = 1}^N {\bf Y_i}
\]
And the $cv^2$:
\[
\widehat{cv^2}=\widehat{\theta_2}=T_2({\bf Y_1}, \ldots , {\bf Y_N})=\frac{\sum_{i = 1}^N \{\bf Y_i - \left[\sum_{j =1}^N {\bf Y_j} \right] / N  \}^2 /(N - 1)}{\{ \left[ \sum_{j = 1}^N \bf Y_j \right] / N\}^2 }
\]
\item[{\bf Algorithm}]
Treat the pseudo sample as a sample from the SIS and compute the statistics based on it. That means, this bootstrap replication can be got by:
\[
\widehat{|\Sigma|}^{*1} =\frac{1}{N} \sum_{i = 1}^N {\bf Y^*_i};\ \widehat{cv^2}_{*1} =cv^2 of ({\bf Y^*_1}, \ldots , {\bf Y^*_N})
\]
\end{enumerate}

\item[\textbf{(3)}] \textbf{Bootstrap-t Confidence Interval.}
\begin{enumerate}
\item[{\bf Concept}]
Repeat the previous two steps until we get B Bootstrap replications: $\widehat{\theta_i}^{*1},\ldots,\widehat{\theta_i}^{*B},\ i=1,2$. The empirical distribution of $\widehat{\theta_i}^*$ is the nonparametric maximum likelihood estimator of CDF of $\widehat{\theta_i}^*$, and the latter is consistent estimator of the CDF of $\widehat{\theta_i}$. So we can use $(\frac {\alpha}{2})100_{th}$ and $(1-\frac {\alpha}{2})100_{th}$ percentiles of the empirical distribution as our confidence Interval.
\item[{\bf Algorithm}]
Repeat the previous two steps for B times. For $\{ \widehat{|\Sigma|}^{*1}, \ldots, \widehat{|\Sigma|}^{*B} \}$, define $\widehat{|\Sigma|}_{(a)}^*$ as the $100a_{th}$ percentile of the list of values. Then bootstrap-t $(1-\alpha)100\%$ confidence interval of $\widehat{|\Sigma|}$ is $[\widehat{|\Sigma|}_{(\alpha/2)}^*,\widehat{|\Sigma|}_{(1-\alpha/2)}^*]$. Similarly we can get confidence interval for $\widehat{cv^2}$.
\end{enumerate}

\end{itemize}

\end{document}